\documentclass[12pt]{amsart}
\usepackage{amssymb,amsmath,amsthm,enumerate,comment,booktabs,xcolor}
\usepackage[mathscr]{euscript}
\usepackage[a4paper]{geometry}
\usepackage[T1]{fontenc}
\usepackage[utf8]{inputenc}

\theoremstyle{plain}
\newtheorem{theorem}{Theorem}[section]
\newtheorem{proposition}[theorem]{Proposition}

\theoremstyle{definition}
\newtheorem{remark}[theorem]{Remark}
\newtheorem{definition}[theorem]{Definition}

\numberwithin{equation}{section}

\def\C {\mathbb{C}}

\usepackage{mathtools}
\DeclarePairedDelimiter\floor{\lfloor}{\rfloor}

\def \au {\rm}
\def \ti {\it}
\def \jou {\rm}
\def \bk {\it}
\def \no#1#2#3 {{\bf #1} (#3), #2.}
\def \eds#1#2#3 {#1, #2, #3.}
\newcommand{\T}{(T(t))_{t\ge 0}}

\DeclareMathOperator*{\re}{Re}

\begin{document}

\title[Growth rates for operator semigroups]
{Unified growth rates for operator semigroups\\ under generalized Kreiss conditions}

\author[F. Dell'Oro, A. Farina, V. Pata]
{Filippo Dell'Oro$^*$, Alberto Farina$^\dag$ and Vittorino Pata$^*$}

\address{$^*$Politecnico di Milano - Dipartimento di Matematica
\newline\indent
Via Bonardi 9, 20133 Milano, Italy}
\email{filippo.delloro@polimi.it}
\email{vittorino.pata@polimi.it}

\address{$^\dag$Université de Picardie Jules Verne - LAMFA
\newline\indent
CNRS UMR 7352, 33, Rue Saint-Leu 80039 Amiens, France}
\email{alberto.farina@u-picardie.fr}

\subjclass[2020]{47D06, 47A10}
\keywords{$C_0$-semigroup, Hilbert space, growth rate, generalized Kreiss condition.}

\begin{abstract}
In this note we establish a unified growth rate for the operator norm of $C_0$-semigroups on Hilbert spaces
whose generators satisfy the generalized Kreiss resolvent condition.
Our bound contains and improves several known estimates in the literature. In particular, it
captures the transition between different super-linear growth behaviors.
\end{abstract}

\maketitle

\section{Introduction}

\noindent
Let $\T$ be a $C_0$-semigroup on a complex Hilbert space $H$ with infinitesimal generator $A$,
and let $\rho(A)$ be the resolvent set of $A$. We denote by
$$R(\lambda,A)=(\lambda- A)^{-1}$$
the resolvent operator at $\lambda\in\rho(A)$.

\begin{definition}
We say that $A$ satisfies the \emph{generalized Kreiss condition} if the open right half-plane
$\C^{+} = \{ \lambda \in \C : \re \lambda > 0\}$
is contained in $\rho(A)$ and there exists a non-decreasing function $g:(0,\infty)\to (0,\infty)$
such that
\begin{equation}
\label{gen-kreiss}
\|R(\lambda,A)\|\leq g \Big(\frac{1}{\re \lambda}\Big),\quad\forall \lambda \in \C^{+}.
\end{equation}
\end{definition}

\begin{remark}
In fact, as our result will involve only the large-time behavior of $g$,
it is enough to assume~\eqref{gen-kreiss} for $\re\lambda<s_0$, with $s_0>0$ large enough.
For $\re\lambda\geq s_0$, the usual
Hille-Yosida resolvent estimate allows one to recover the global formulation
after a harmless redefinition of $g$ on $(0,s_0]$.
\end{remark}

The particular case $g(t)= C t\,$ for some constant $C\geq 1$ corresponds to the
\emph{standard Kreiss condition},
first introduced by Kreiss \cite{kreiss-paper} in the finite-dimensional case.
By adapting a technique devised in \cite{BoMu,CCEL}
for discrete semigroups, it has been proved in \cite{ARN} that within the standard Kreiss condition
the following growth rate holds:
\begin{equation}
\label{arnold-est}
\|T (t)\|=O\left(\frac{t}{\sqrt{\log t}}\right),
\end{equation}
see also \cite{EZ,RO}.

\begin{remark}
The standard Kreiss condition is significant only if $C>1$. Indeed, when $C=1$ the semigroup is
a contraction semigroup (i.e., $\|T (t)\|\leq 1$),
as a consequence of the Hille-Yosida theorem; see, e.g., \cite[Theorem II.3.5]{ENG}.
\end{remark}

As shown in \cite{ROVE}, the validity of~\eqref{gen-kreiss} implies that
\begin{equation}
\label{gen-kreiss-est}
\|T (t)\|=O(g(t)),
\end{equation}
see also \cite{Bou,EZ2,HEL,HEL2,HEL3,RO,ROVE}.

\begin{remark}
It is well known that within the generalized Kreiss condition (in a Hilbert space setting) the semigroup growth rate is
always subexponential;
see, e.g., \cite{DE,ROVE}. Accordingly, it makes no sense
to consider~\eqref{gen-kreiss}
with a fast growing $g$.
 
\end{remark}
Despite its greater generality, estimate~\eqref{gen-kreiss-est} is not sharp:\
this is immediately seen when $g(t)= C t\,$, where~\eqref{arnold-est} provides
a better bound.
More recently, it has been proved in \cite{DE} that
if~\eqref{gen-kreiss} is satisfied and the function $h(t)=g(t)/t$ is non-decreasing, then
\begin{equation}
\label{mia-est}
\|T (t)\|= O\left(g(t)\frac{h(t)}{\sqrt{\log t}}\right).
\end{equation}
In the case of the standard Kreiss condition, we recover exactly~\eqref{arnold-est}.
In fact, \eqref{mia-est} is sharper than \eqref{gen-kreiss-est} whenever $g(t)=o(t\sqrt{\log t}\,)$,
but can be worse otherwise.
More details concerning the Kreiss conditions and their applications may be found for instance in \cite{ARN2,EZ2,ROVE,ROVE2}.

\subsection*{Notation} Given two positive functions $f_1,f_2$ defined in a neighborhood of $+\infty$,
we write $f_1(t) = O(f_2(t))$ to mean  $f_1(t) \leq c f_2(t)$ for some constant $c>0$ and
every $t > 0$ sufficiently large, $f_1(t) = o(f_2(t))$
to mean $f_1(t)/f_2(t)\to 0$ as $t\to\infty$, and $f_1(t) \sim f_2(t)$ to mean $f_1(t)/f_2(t)\to 1$ as $t\to\infty$.


\section{The Result}

\noindent
The aim of this note is to provide a unified growth rate that captures
the classical bound \eqref{gen-kreiss-est} and, at the same time, recovers
or improves the modified
bound \eqref{mia-est} for fine super-linear growth behaviors.
To this end, let us define the integral function
$$
\Lambda(t) = \int_{2}^t \frac{s}{[g(s)]^2}\, ds,\quad t>2.
$$
In this definition, the particular choice of the lower limit $2$ is irrelevant for large-time
estimates and may be replaced by any positive number.
Our main result, proved in the last Section~\ref{sec:proof}, reads as follows.

\begin{theorem}
\label{thm-kreiss}
Let $\T$ be a $C_0$-semigroup on a complex Hilbert space $H$ with infinitesimal generator $A$
satisfying the generalized Kreiss condition~\eqref{gen-kreiss}.
Then
$$
\|T (t)\|= O\left(\frac{g(t)}{\sqrt{\Lambda(t)}}\right).
$$
\end{theorem}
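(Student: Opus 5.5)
We adapt the Hilbert-space argument behind \eqref{arnold-est} and \eqref{mia-est}: Plancherel, then a weighted average in time. Fix $x,y\in H$ with $\|x\|=\|y\|=1$ and a large $t$. For $a>0$ the function $s\mapsto e^{-as}T(s)x$ is square integrable on $(0,\infty)$, and its Fourier transform is $\xi\mapsto R(a+i\xi,A)x$. The same holds for the adjoint semigroup, whose generator $A^*$ satisfies \eqref{gen-kreiss} with the same $g$. The first step is the standard consequence of Plancherel and the resolvent identity (as in \cite{ROVE,DE}):
$$
\int_0^\infty e^{-2as}\|T(s)x\|^2\,ds\le \frac{1}{2\pi}\int_{\mathbb R}\|R(a+i\xi,A)x\|^2\,d\xi ,
$$
together with the bound $\int_{\mathbb R}\|R(a+i\xi,A)x\|^2d\xi\lesssim g(1/a)^2/a$. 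The latter follows from the resolvent identity by comparing with the resolvent at real part $a+1$ (or larger), where Hille--Yosida gives control. Hence
$$
\int_0^\infty e^{-2as}\|T(s)x\|^2ds\le C\,\frac{g(1/a)^2}{a},
$$
and similarly for $T(s)^*y$.

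Next, for $0<s<t$ write $\langle T(t)x,y\rangle=\langle T(s)x,T(t-s)^*y\rangle$ and average against a weight $w\ge0$ on $(0,t)$ with $\int_0^t w=1$. Cauchy--Schwarz gives
$$
|\langle T(t)x,y\rangle|\le \Big(\int_0^t w(s)^2\|T(s)x\|^2\phi(s)^{-1}ds\Big)^{1/2}\Big(\int_0^t\|T(t-s)^*y\|^2\phi(s)\,ds\Big)^{1/2}
$$
for a suitable auxiliary $\phi$. The key idea, replacing the single choice $a=1/t$ used in \eqref{gen-kreiss-est}, is to use all scales at once. Decompose $(0,t)$ dyadically, or more precisely integrate over the parameter $a$. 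From the resolvent bound one obtains the local bound
$$
\int_{r}^{2r}\|T(s)x\|^2ds\lesssim r\,g(r)^2 \qquad (a=1/r).
$$
Equivalently, $\|T(\cdot)x\|^2$ has average size at most $g(r)^2$ near scale $r$. Choose the weight $w(s)$ proportional to $s/g(s)^2$ on $(2,t/2)$ and symmetrically near $t$. Then $\int w$ is proportional to $\Lambda(t)$. Using submultiplicativity, estimating $\|T(t)\|\le\|T(t-s)\|\,\|T(s)\|$ is too crude, so instead we use the inner-product splitting with $x$-side factor $\|T(s)x\|$ and $y$-side factor $\|T(t-s)^*y\|$, where $\|T(t-s)^*\|\lesssim g(t)$ on $s\le t/2$ by \eqref{gen-kreiss-est}. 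We obtain
$$
|\langle T(t)x,y\rangle|\,\Lambda(t)\lesssim g(t)\int_2^{t/2}\frac{s}{g(s)^2}\|T(s)x\|\,ds
\le g(t)\Big(\int_2^{t/2}\frac{s^2}{g(s)^4}\frac{g(s)^2}{s}ds\Big)^{1/2}\Big(\int_2^{t/2}\frac{\|T(s)x\|^2}{s\,g(s)^2}\cdot s\,ds\Big)^{1/2}.
$$
The first factor is $\Lambda(t)^{1/2}$. For the second factor, the dyadic bound gives $\int_{r}^{2r}\|T(s)x\|^2/g(s)^2\,ds\lesssim r$ (using monotonicity of $g$), and this needs to be summed against a $1/s$-type weight to yield $O(\Lambda)$-compatible control. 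Balancing the weights correctly should give $|\langle T(t)x,y\rangle|\lesssim g(t)\Lambda(t)^{-1/2}$.

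The main obstacle is exactly this bookkeeping: choosing the weight and splitting the Cauchy--Schwarz step so that both factors produce $\Lambda(t)$ and no stray logarithm appears. I would first try the weight $w(s)=s/g(s)^2$ with the exponentially damped Plancherel bound applied at the single scale $a=1/t$ for the whole interval. The bound $\int_0^t\|T(s)x\|^2\,ds\lesssim t\,g(t)^2$ is available, but pairing it with $w$ needs the sharper, scale-dependent form. So I would pass to the Laplace-weighted version $\int_0^\infty e^{-2s/r}\|T(s)x\|^2\,ds\lesssim r\,g(r)^2$ and integrate it in $r$ against $dr/(r\,g(r)^2)$, Fubini giving a weight in $s$ comparable to $s/g(s)^2$ up to constants. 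Checking that this Fubini comparison holds for an arbitrary non-decreasing $g$ is the delicate point. As a consistency check, for $g(t)=Ct$ we have $\Lambda(t)\sim C^{-2}\log t$ and recover \eqref{arnold-est}.
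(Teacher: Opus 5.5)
Your proposal has a genuine gap. It is structural, not a matter of bookkeeping.

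The fatal step is bounding the adjoint factor pointwise, $\|T(t-s)^*y\|\lesssim g(t)$. After that you only have $|\langle T(t)x,y\rangle|\lesssim g(t)\|T(s)x\|$ for each $s$, and no weighting can beat $g(t)\inf_s\|T(s)x\|$. Test this on $T(t)=I$, which satisfies \eqref{gen-kreiss} with $g(t)=t$, so $\Lambda(t)=\log(t/2)$. The right-hand side $g(t)\int_2^{t/2}\frac{s}{g(s)^2}\|T(s)x\|\,ds$ equals $t\log(t/4)$, and dividing by the weight mass gives only $O(t)$, not $O(t/\sqrt{\log t})$. So the estimate you need to close, $\int_2^{t/2}\frac{s}{g(s)^2}\|T(s)x\|\,ds\lesssim\sqrt{\Lambda(t)}$, is false for an admissible semigroup. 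Your Cauchy--Schwarz split is also miscomputed: with first factor $\int s/g(s)^2\,ds$, the second must be $\int s\|T(s)x\|^2/g(s)^2\,ds$.

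Independently, your $L^2$ input is too weak. Compare with a line $\re\lambda=b$ to the right of the growth bound, using the resolvent identity and Plancherel there. This gives $\int_{\mathbb R}\|R(a+i\xi,A)x\|^2\,d\xi\lesssim(1+b\,g(1/a))^2$, with no factor $1/a$. With $a=1/t$ this is $\int_0^t\|T(s)x\|^2\,ds\le Kg(t)^2$, and likewise for $T^*$; these are \eqref{stima1}--\eqref{stima2}. Your bound $\int_r^{2r}\|T(s)x\|^2\,ds\lesssim r\,g(r)^2$ is just the averaged form of \eqref{gen-kreiss-est} and carries no gain. Integrating it against $dr/(r\,g(r)^2)$ up to $t$, as you suggest, produces a right-hand side of order $t$.

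The missing idea is to keep \emph{both} factors in $L^2$, with the adjoint at the small times. For $\tau\in[2^{n-1},2^n]$ write $\langle T(t)x,y\rangle=\langle T(t-\tau)x,T^*(\tau)y\rangle$. Integrate over the block and use Cauchy--Schwarz with \eqref{stima2}:
\[
2^{n-1}|\langle T(t)x,y\rangle|\le\sqrt K\,g(2^n)\Big(\int_{t-2^n}^{t-2^{n-1}}\|T(\tau)x\|^2\,d\tau\Big)^{1/2}.
\]
Take the supremum over $y$, square, and sum over $1\le n\le\lfloor\log_2 t\rfloor$. The intervals $[t-2^n,t-2^{n-1}]$ are disjoint, so \eqref{stima1} is used only once, giving $\sum_n(2^n/g(2^n))^2\|T(t)x\|^2\le4K^2g(t)^2$. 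Monotonicity of $g$ alone gives $\int_{2^n}^{2^{n+1}}s\,g(s)^{-2}\,ds\le\frac32(2^n/g(2^n))^2$, so the sum exceeds $\frac23\Lambda(t)$ and the theorem follows. Squaring block by block before summing replaces your weight balancing, and it needs no doubling or regularity of $g$.
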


Note that, unlike in~\cite{DE}, we do not
require the additional monotonicity assumption on the function $h(t)=g(t)/t$.
Theorem \ref{thm-kreiss} embodies both
estimates~\eqref{gen-kreiss-est} and~\eqref{mia-est}.

\begin{proposition}
\label{coro}
The following hold:
\begin{enumerate}
\item[\rm (i)] We always have
$$\frac{g(t)}{\sqrt{\Lambda(t)}}=O(g(t)).$$
In particular,
$$\frac{g(t)}{\sqrt{\Lambda(t)}}=o(g(t))$$
whenever $\Lambda(t)\to\infty$, i.e., whenever $t/[g(t)]^2$ is not integrable at infinity.
\vskip2mm
\item[\rm (ii)] If $h$ is non-decreasing we have
$$
\frac{g(t)}{\sqrt{\Lambda(t)}} = O\left(g(t)\frac{h(t)}{\sqrt{\log t}}\right).
$$
\end{enumerate}
\end{proposition}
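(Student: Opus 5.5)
Both items reduce to elementary lower bounds on $\Lambda(t)$; the semigroup plays no role, since only the monotonicity of $g$ and (in (ii)) of $h$ is used.

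\emph{Part (i).} Since $\Lambda$ is nondecreasing and positive for $t>2$, fix any $t_0>2$. Then $\Lambda(t)\ge\Lambda(t_0)>0$ for all $t\ge t_0$. Consequently
$$\frac{g(t)}{\sqrt{\Lambda(t)}}\le \Lambda(t_0)^{-1/2}\,g(t),\qquad t\ge t_0,$$
which is the $O(g(t))$ claim. If in addition $s/[g(s)]^2$ is not integrable at infinity, then $\Lambda(t)\to\infty$. Hence $\big(g(t)/\sqrt{\Lambda(t)}\big)/g(t)=\Lambda(t)^{-1/2}\to0$, which is the $o(g(t))$ claim. Conversely, $\Lambda(t)\to\infty$ holds exactly when the integrand is non-integrable, since the integrand is positive.

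\emph{Part (ii).} It suffices to show $\Lambda(t)\ge c\,\log t/[h(t)]^2$ for large $t$, because then
$$\frac{g(t)}{\sqrt{\Lambda(t)}}\le c^{-1/2}\,g(t)\,\frac{h(t)}{\sqrt{\log t}}.$$
Write the integrand as
$$\frac{s}{[g(s)]^2}=\frac{1}{s\,[h(s)]^2}.$$
Since $h$ is non-decreasing, $h(s)\le h(t)$ for $2\le s\le t$. Integrating,
$$\Lambda(t)\ge\frac{1}{[h(t)]^2}\int_2^t\frac{ds}{s}=\frac{\log(t/2)}{[h(t)]^2}.$$
Finally $\log(t/2)\ge\tfrac12\log t$ for $t\ge4$, which gives the claim with $c=\tfrac12$.

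No step is a serious obstacle. The only points needing care are that $\Lambda$ is positive, so the quotient is defined for every $t>2$, and that the estimates are claimed only for $t$ large, consistent with the paper's notation for $O$ and $o$.
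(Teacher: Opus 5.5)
Your proof is correct and follows the paper's argument. Part (i) comes from the monotonicity and positivity of $\Lambda$, and part (ii) from the bound $h(s)\le h(t)$, which gives $\Lambda(t)\ge \log(t/2)/[h(t)]^2$; your explicit step $\log(t/2)\ge\tfrac12\log t$ for $t\ge4$ simply makes the paper's asymptotic ``$\sim \log t/[h(t)]^2$'' quantitative.
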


\begin{proof}
As $\Lambda$ is increasing, point (i) is immediate.
To show point (ii), we exploit the monotonicity of $h$.
Then, for $2<s<t$, we write $g(s)\leq sh(t)$. Hence,
$$\Lambda(t)\geq \frac{1}{[h(t)]^2} \int_2^t\frac{1}{s}\,ds
\sim \frac{\log t}{[h(t)]^2},$$
yielding the desired conclusion.
\end{proof}

It is important to note that, when $\Lambda(t)\to\infty$, point (i) of the proposition above
improves the classical bound~\eqref{gen-kreiss-est}.
A precise quantification of such an improvement clearly depends
on the asymptotic behavior of $g$. In the next section, we will provide some examples in this direction.

\section{Some Examples}

\noindent
We now illustrate the effectiveness of Theorem \ref{thm-kreiss}
focusing on some selected examples.
These cases are by no means exhaustive, and further examples can be easily constructed along the same lines.

\subsection{Power super-linear growth}
If $g(t)=t^\alpha$ with $\alpha>1$ then $t/[g(t)]^2$ is integrable at infinity.
In this situation, we recover \eqref{gen-kreiss-est}
which is sharper than \eqref{mia-est}.

\begin{table}[htbp]
\centering
\renewcommand{\arraystretch}{2.1}
\setlength{\tabcolsep}{18pt}
\begin{tabular}{|c|c|c|}
\hline
\textbf{Bound (\ref{gen-kreiss-est})} & \textbf{Bound (\ref{mia-est})} & \textbf{Bound Thm.\ \ref{thm-kreiss}} \\
\hline\hline
$O(t^\alpha)$ & $O\left(\displaystyle\frac{t^{2\alpha-1}}{\sqrt{\log t}}\right)$ & ${O(t^\alpha)}$ \\[1ex]
\hline
\end{tabular}
\vspace{0.3cm}
\caption{$g(t)=t^\alpha$ with $\alpha > 1$.}
\end{table}

\subsection{Fine super-linear growth}
We consider functions $g$ growing slightly faster than a linear function, introducing logarithmic corrections.

\smallskip
\begin{enumerate}

\item[$\bullet$] If $g(t)=t\, (\log(t+1))^\beta$ with $\beta\in(0,\frac{1}{2})$
then
$$
\Lambda(t) \sim \frac{(\log t)^{1-2\beta}}{1-2\beta}.
$$
We recover \eqref{mia-est}, which is sharper than \eqref{gen-kreiss-est}.

\medskip
\item[$\bullet$]
If $g(t)=t \sqrt{\log(t+1)}$ then
$$
\Lambda(t) \sim \log \log t.
$$
Here we get the bound
$$
\|T (t)\|= O\left(\frac{t \sqrt{\log t}}{\sqrt{\log \log t}}\right),
$$
sharper than both \eqref{gen-kreiss-est} and \eqref{mia-est}.

\medskip
\item[$\bullet$]
If $g(t)=t\, (\log(t+1))^{\beta}$ with $\beta>\frac12$ then
$t/[g(t)]^2$ is integrable at infinity. In this situation we recover \eqref{gen-kreiss-est}
which is sharper than \eqref{mia-est}.
\end{enumerate}

\begin{table}[htbp]
\centering
\renewcommand{\arraystretch}{2.3}
\setlength{\tabcolsep}{16pt}
\begin{tabular}{|c|c|c|}
\hline
\textbf{Bound (\ref{gen-kreiss-est})} & \textbf{Bound (\ref{mia-est})} & \textbf{Bound Thm.\ \ref{thm-kreiss}} \\
\hline\hline
$O\big(t (\log t)^\beta\big)$\quad $\beta \in \big(0,\frac12\big)$
& $O\left(\displaystyle\frac{t (\log t)^{\beta}}{\sqrt{(\log t)^{1-2\beta}}}\right)$
& $O\left(\displaystyle\frac{t (\log t)^{\beta}}{\sqrt{(\log t)^{1-2\beta}}}\right)$ \\
\hline
$O(t \sqrt{\log t})$ & $O\big(t \sqrt{\log t}\big)$
& $O\left(\displaystyle\frac{t \sqrt{\log t}}{\sqrt{\log \log t}}\right)$ \\ [1.2ex]
\hline
$O\big(t (\log t)^\beta\big)$\quad $\beta >\frac12$
& $O\big(t (\log t)^{2\beta-\frac12}\big)$ & $O\big(t (\log t)^\beta\big)$ \\
\hline
\end{tabular}
\vspace{0.25cm}
\caption{$g(t)=t(\log(t+1))^\beta$ with $\beta>0$.}
\end{table}

\subsection{Finer super-linear growth}
We now refine the analysis by adding a double logarithmic correction.

\smallskip
\begin{enumerate}
\item[$\bullet$]
If $g(t)=t \sqrt{\log(t+1)} (\log \log(t+e))^{\gamma}$ with $\gamma\in(0,\frac{1}{2})$
then
$$
\Lambda(t) \sim \frac{(\log \log t)^{1-2\gamma}}{1-2\gamma}.
$$
We get the bound
$$
\|T (t)\|= O\left(\frac{t\, \sqrt{\log t}\,
(\log \log t)^\gamma}{\sqrt{(\log \log t)^{1-2\gamma}}}\right),
$$
sharper than both \eqref{gen-kreiss-est} and \eqref{mia-est}.

\medskip
\item[$\bullet$]
If $g(t)=t \sqrt{\log(t+1)} \sqrt{\log \log(t+e)}$
then
$$
\Lambda(t) \sim \log \log \log t,
$$
and we get the bound
$$
\|T (t)\|= O\left(\frac{t\, \sqrt{\log t}\, \sqrt{\log \log t}}{\sqrt{\log \log \log t}}\right),
$$
sharper than both \eqref{gen-kreiss-est} and \eqref{mia-est}.

\medskip
\item[$\bullet$]
If $g(t)=t \sqrt{\log(t+1)} (\log \log(t+e))^{\gamma}$ with $\gamma>\frac12$ then
$t/[g(t)]^2$ is integrable at infinity.
In this case we recover \eqref{gen-kreiss-est},
which is sharper than \eqref{mia-est}.
\end{enumerate}

\begin{table}[htbp]
\centering
\renewcommand{\arraystretch}{2.4}
\setlength{\tabcolsep}{3.5pt}
\begin{tabular}{|c|c|c|}
\hline
 \textbf{Bound (\ref{gen-kreiss-est})}
& \textbf{Bound (\ref{mia-est})} & \textbf{Bound Thm.\ \ref{thm-kreiss}} \\
\hline\hline
$O\big(t \sqrt{\log t} (\log \log t)^\gamma\big)$\quad $\gamma \in \big(0,\frac12\big)$
& $O\big(t \sqrt{\log t} (\log \log t)^{2\gamma}\big)$
& $O\left(\displaystyle\frac{t \sqrt{\log t} (\log \log t)^\gamma}{\sqrt{(\log \log t)^{1-2\gamma}}}\right)$ \\
\hline
$O\big(t \sqrt{\log t} \sqrt{\log \log t}\big)$
& $O\big(t \sqrt{\log t} (\log \log t)\big)$
& $O\left(\displaystyle\frac{t \sqrt{\log t} \sqrt{\log \log t}}{\sqrt{\log \log \log t}}\right)$ \\ [1.2ex]
\hline
$O\big(t \sqrt{\log t} (\log \log t)^\gamma\big)$\quad $\gamma>\frac12$
& $O\big(t \sqrt{\log t} (\log \log t)^{2\gamma}\big)$
& $O\big(t \sqrt{\log t} (\log \log t)^\gamma\big)$ \\
\hline
\end{tabular}
\vspace{0.25cm}
\caption{$g(t)=t \sqrt{\log(t+1)} (\log \log(t+e))^{\gamma}$ with $\gamma>0$.}
\end{table}

\subsection{Linear growth}
Finally, we consider the linear growth $g(t)=Ct$ for some constant $C\geq1$,
which corresponds to the standard Kreiss condition.
Here we have
$$
\Lambda(t) \sim \frac{\log t}{C^2},
$$
and we recover~\eqref{arnold-est} and~\eqref{mia-est},
sharper than \eqref{gen-kreiss-est}.

\begin{table}[htbp]
\centering
\renewcommand{\arraystretch}{2.1}
\setlength{\tabcolsep}{16pt}
\begin{tabular}{|c|c|c|}
\hline
\textbf{Bound (\ref{gen-kreiss-est})}
& \textbf{Bounds (\ref{arnold-est}) and (\ref{mia-est})} & \textbf{Bound Thm.\ \ref{thm-kreiss}} \\[1ex]
\hline\hline
 $O(t)$ & $\displaystyle O\left(\frac{t}{\sqrt{\log t}}\right)$
& $\displaystyle O\left(\frac{t}{\sqrt{\log t}}\right)$ \\[1ex]
\hline
\end{tabular}
\vspace{0.25cm}
\caption{$g(t)=Ct$ with $C\geq1$ (standard Kreiss condition).}
\end{table}

\section{Proof of Theorem \ref{thm-kreiss}}
\label{sec:proof}

\noindent
Let $x,y\in H$ be arbitrarily fixed vectors of unit norm.
As shown in the proof of \cite[Theorem 1.1]{DE}, the validity of the generalized Kreiss
condition \eqref{gen-kreiss} implies the following integral bounds
on $\T$ and the adjoint semigroup $(T^*(t))_{t\ge0}$
\begin{align}
\label{stima1}
\int_{0}^{t}\|T(\tau)x\|^2\, \mathrm{d}\tau \leq K[g(t)]^2,\\
\noalign{\vskip1mm}
\label{stima2}
\int_{0}^{t}\|T^*(\tau)y\|^2\, \mathrm{d}\tau\leq K[g(t)]^2,
\end{align}
for every $t>1$ and some constant $K>0$ independent of $t,x,y$\footnote{Estimates 
\eqref{stima1}-\eqref{stima2} in \cite{DE} appear written with $1+g(t)$ in place of $g(t)$.
Since $t>1$ and $g$ is non-decreasing, this is clearly inessential.}.
We emphasize that
such bounds are deduced by means 
of the Plancherel theorem
for square-integrable functions taking values in a Hilbert space;
see e.g.,\ \cite[Appendix~C]{ENG}.
Let now $t\geq2$ be arbitrarily fixed. For $1\leq a<b\leq t$ to be chosen later, we write
\begin{align*}
(b-a)|\langle T(t) x, y \rangle| &= \int_a^b |\langle T(t-\tau) x, T^*(\tau) y \rangle|\mathrm{d}\tau \\
&\leq
\Big(\int_0^b \| T^*(\tau) y\|^2 \mathrm{d}\tau  \Big)^{1/2}
\Big(\int_a^b \| T(t-\tau) x\|^2 \mathrm{d}\tau  \Big)^{1/2}.
\end{align*}
Applying \eqref{stima2} to the first integral, and making a change of variable in the second one,
we obtain
$$
(b-a)|\langle T(t) x, y \rangle|\leq \sqrt{K}\,g(b)
\bigg(\int_{t-b}^{t-a} \| T(\tau) x\|^2 \mathrm{d}\tau\bigg)^{1/2}.
$$
Taking the supremum over $\|y\|=1$ and squaring both sides, we arrive at
$$
\left(\frac{b-a}{g(b)}\right)^2\|T(t)x\|^2 \leq {K}\int_{t-b}^{t-a}\|T(\tau)x\|^2 d\tau.
$$
At this point, setting $N(t)= \floor*{\log_2 t}$, where $\floor*{\,\cdot\,}$ is the floor function,
we choose $a= 2^{n-1}$ and $b=2^{n}$ for $1\leq n\leq N(t)$. With this choice, the inequality above becomes
$$
\frac{1}{4}\left(\frac{2^n}{g(2^n)}\right)^2 \|T(t)x\|^2 \leq K\int_{t-2^n}^{t-2^{n-1}}\|T(\tau)x\|^2 d\tau.
$$
{Summing on $1\leq n\leq N(t)$} and using \eqref{stima1}, we find
\begin{align}
\label{almost}
\frac{1}{4}\sum_{n=1}^{N(t)}\left(\frac{2^n}{g(2^n)}\right)^2 \|T(t)x\|^2
&\leq K \sum_{n=1}^{N(t)} \int_{t-2^n}^{t-2^{n-1}}\|T(\tau)x\|^2 d\tau\\\nonumber
&\leq  K \int_{0}^{t}\|T(\tau)x\|^2 d\tau\\\noalign{\vskip3.3mm}\nonumber
&\leq K^2[g(t)]^2.
\end{align}
We now need a lower bound for the sum in the left-hand side.
Recalling that $g$ is non-decreasing, we estimate
$$
\int_{2^{n}}^{2^{n+1}}\frac{s}{[g(s)]^2} ds \leq \frac{1}{[g(2^{n})]^2}
\int_{2^{n}}^{2^{n+1}}s\, ds  =  \frac32 \left(\frac{2^n}{g(2^n)}\right)^2.
$$
Since $2^{N(t)+1}>t$, we infer that
$$
\sum_{n=1}^{N(t)}\left(\frac{2^n}{g(2^n)}\right)^2 \geq
\frac23 \sum_{n=1}^{N(t)} \int_{2^{n}}^{2^{n+1}}\frac{s}{[g(s)]^2} ds > \frac23 \Lambda(t).
$$
Plugging this inequality into~\eqref{almost}, we end up with
$$
\|T(t)x\|^2
\leq \frac{6 K^2[g(t)]^2}{\Lambda(t)}.
$$
Finally, taking the supremum over $\|x\|=1$, we are led to
$$
\|T(t)\|
\leq \frac{\sqrt{6} K g(t)}{\sqrt{\Lambda(t)}},
$$
and the desired conclusion follows.
\qed


\end{document}